\documentclass [12pt]{article}
\usepackage{amssymb,amsmath,comment,amsthm}

\def\F{\mathbb{F}}

\newtheorem{theorem}{Theorem}[section]
\newtheorem{proposition}[theorem]{Proposition}
\newtheorem{corollary}[theorem]{Corollary}
\newtheorem{lemma}[theorem]{Lemma}

\newtheorem{conjecture}[theorem]{Conjecture}

\begin{document}
\title{A variant of Collatz's Conjecture over Binary Polynomials}
\author{ Luis H. Gallardo  and Olivier Rahavandrainy\\
Univ. Brest, UMR CNRS 6205\\
Laboratoire de Math\'ematiques de Bretagne Atlantique\\
6, Avenue Le Gorgeu, C.S. 93837, 29238 Brest Cedex 3, France.\\
e-mail: Luis.Gallardo@univ-brest.fr \\
Olivier.Rahavandrainy@univ-brest.fr}
\maketitle

\begin{itemize}
\item[a)]
Running head: Polynomial Collatz's Conjecture
\item[b)]
Keywords: finite fields,
characteristic $2$,  odd (even) polynomials
\item[c)]
Mathematics Subject Classification (2020): 11T55, 11T06.
\item[d)]
Corresponding author:
\begin{center} Luis H. Gallardo
\end{center}
\end{itemize}
\newpage~\\
{\bf{Abstract}}
We study a natural analogue of Collatz's Conjecture for polynomials over $\mathbb{F}_2$.
{\section{Introduction}}

The Collatz conjecture, also known as the Syracuse algorithm, is one of the most famous unsolved problems in number theory. Quoting \cite{ref-wiki}, ``it concerns sequences of integers in which each term is obtained from the previous term as follows: if the previous term is even, the next term is one half of the previous term. If the previous term is odd, the next term is $3$ times the previous term plus $1$. The conjecture states that these sequences always reach $1$, no matter which positive integer is chosen to start the sequence.''

It is natural to consider an analogue problem for polynomials over finite fields, which in some cases could be easier to treat. A number of attempts (probably not exhaustive) appear in \cite{Paran2023,Alon,Paran2025,Yucas,Matthews}. The first approach, due to Matthews \cite{Matthews}, can be loosely described as follows. Given fixed polynomials $K,D \in \mathbb{F}_q[x]$, we take a polynomial $A \in \mathbb{F}_q[x]$ and build a sequence with first term $A$, such that the next term $T$ corresponding to a given term $S$ is defined by
\begin{equation}\label{Matth}
T := \frac{K \cdot S - R}{D},
\end{equation}
where $K$ is coprime to $D$ and $R$ is a polynomial congruent to $Kr$ modulo $D$, with $r$ belonging to a complete set of residues modulo $D$. This guarantees that $T \in \mathbb{F}_q[x]$ in \eqref{Matth}.

When $q$ is even, many of the papers in the above literature study the case
$q=2$, $K = x$, and $R$ belonging to the set of residues $\{0,1\}$ modulo $x$. We may view the choice $K = x \in \mathbb{F}_2[x]$ as the analogue of the number $2 \in \mathbb{Z}$.

We believe it is also interesting to consider the case $q=2$, $K = x(x+1)$, since the rings $\mathbb{F}_2[x]$ and $\mathbb{F}_2[x+1]$ are indistinguishable. We have already considered (see \cite{Gall-Rahav2019}) an analogy between the ring of integers $\mathbb{Z}$ and the ring of binary polynomials $\mathbb{F}_2[x]$, in which the power $2^{a+b}$ in $\mathbb{Z}$ is replaced by the power $x^a(x+1)^b$ in $\mathbb{F}_2[x]$. This was used to study some ``Mersenne polynomials'' $M \in \mathbb{F}_2[x]$ defined by
\[ M := x^a(x+1)^b + 1, \]
the analogue of Mersenne numbers $2^{a+b} - 1$. These polynomials proved useful in solving some cases of the problem of finding fixed points of the sum-of-divisors function $\sigma \colon \mathbb{F}_2[x] \to \mathbb{F}_2[x]$ defined by
\begin{equation}\label{perfects}
\sigma(A) = \sum_{d \mid A} d,
\end{equation}
see \cite{Gall-Rahav2007, Gall-Rahav2016}.  

As shown by Matthews \cite{Matthews}, for many choices of $K,D,R$ in \eqref{Matth}, the sequence has divergent trajectories. For example, consider the transformation $T \colon \mathbb{F}_2[x] \to \mathbb{F}_2[x]$ defined by
\[
T(A) = \frac{A}{x}, \quad \text{if $x$ divides $A$,}
\]
and otherwise
\begin{equation}\label{MatthEx1}
T(A) = \frac{(x+1)^3 \cdot A + 1}{x}, \quad \text{if } A \equiv 1 \pmod{x}.
\end{equation}

Let us return to the original problem over the integers.
We may reformulate the Syracuse algorithm as follows: For a given positive integer $n$, consider the $2$-adic valuation $a_0$ of $n$: $n = 2^{a_0} n_1$, where $n_1$ is odd. Put $n_2 = 1 + 3n_1$, and again consider the $2$-adic valuation $a_2$ of $n_2$: $n_2 = 2^{a_2} n_3$ with $n_3$ odd, and so on.  

We obtain two sequences of odd and even integers: $[n_1, n_3,\ldots]$ and $[n_2, n_4,\ldots]$. The conjecture states that for any positive integer $n$, there exists an integer $m$ such that for all $t \geq m$, $n_{2t}= 2$ and $n_{2t+1}= 1$. Thus, the two sequences $(n_{2k})_k$ and $(n_{2k+1})_k$ are eventually constant.

Some papers (see, e.g., \cite{Furuta,Izadi}) attempt the difficult task of proving the original conjecture.

The following describes our approach, aimed at making progress in the binary polynomial case.

In the present paper we consider a variant of this problem for binary polynomials. Let $A \in \mathbb{F}_2[x]$ be a nonzero polynomial. We think of $x(x+1) \in \mathbb{F}_2[x]$ as a natural analogue of $2 \in \mathbb{Z}$. Following \cite{Gall-Rahav2007}, we say that $A \in \mathbb{F}_2[x]$ is \emph{odd} if $\gcd(A, x(x+1)) = 1$, i.e., if $A$ has no linear factor. Otherwise, $A$ is called \emph{even}. The first odd polynomial besides $1$ is $M_1 := x^2+x+1$. Thus, the analogue of $1+3n$ for integers is $1 + M_1 A$ for binary polynomials.

We denote by $val_x(S)$ (resp. $val_{x+1}(S)$) the valuation at $x$ (resp. at $x+1$) of a polynomial $S$. This means that
\[
S = x^{val_x(S)} (x+1)^{val_{x+1}(S)} S_1, \quad \text{where $S_1$ is odd}.
\]

For a fixed nonzero binary polynomial $A$, we define ``Collatz transformations'' obtained from $A$ by giving three sequences: an integer sequence $(a_{2k})_k$, and two polynomial sequences $(A_{2k})_k$, $(A_{2k+1})_k$ of even and odd polynomials respectively, associated with $A$, as follows:
\[
\begin{aligned}
&A_0 = A, \quad a_0 = val_x(A_0), \quad b_0 = val_{x+1}(A_0), \\
&A_1 \text{ is the odd polynomial such that } A_0 = x^{a_0}(x+1)^{b_0} A_1, \\
&A_2 = 1 + M_1 A_1, \quad a_2 = val_x(A_2), \ b_2 = val_{x+1}(A_2), \quad 
A_3 = \frac{A_2}{x^{a_2}(x+1)^{b_2}}, \\
&\vdots \\
&A_{2k} = 1+M_1 A_{2k-1}, \quad a_{2k}=val_x(A_{2k}), \ b_{2k}=val_{x+1}(A_{2k}), \quad 
A_{2k+1} = \frac{A_{2k}}{x^{a_{2k}}(x+1)^{b_{2k}}}, \\
&\vdots
\end{aligned}
\]

We may formulate a binary polynomial version of the Collatz conjecture as follows:

\begin{conjecture}\label{collatzconject}
For a given $A \in \mathbb{F}_2[x] \setminus \{0\}$, there exists $m \in \mathbb{N}^*$ such that for all $k \geq m$, $A_{2k} = x(x+1)$ and $A_{2k+1} = 1$.
\end{conjecture}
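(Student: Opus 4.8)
The plan is to prove Conjecture~\ref{collatzconject} outright. The decisive structural fact is that, in contrast with $2\in\mathbb{Z}$, the modulus $x(x+1)\in\mathbb{F}_2[x]$ is rigid: $\mathbb{F}_2[x]/(x(x+1))\cong\mathbb{F}_2\times\mathbb{F}_2$ has trivial unit group. Concretely, a polynomial $P$ is odd if and only if $P(0)=P(1)=1$, i.e.\ if and only if $P\equiv 1\pmod{x(x+1)}$ (Chinese Remainder Theorem). Hence every odd term in the construction can be written $A_{2k-1}=1+x(x+1)B_k$ with $B_k\in\mathbb{F}_2[x]$, and, using the identity $1+M_1=x^2+x=x(x+1)$, one gets the key factorization
\[
A_{2k}=1+M_1A_{2k-1}=1+M_1+M_1\,x(x+1)B_k=x(x+1)\bigl(1+M_1B_k\bigr).
\]
Thus $x(x+1)\mid A_{2k}$, so $a_{2k}\ge 1$ and $b_{2k}\ge 1$; since moreover $\deg A_{2k}=\deg A_{2k-1}+2$, this yields $\deg A_{2k+1}=\deg A_{2k}-a_{2k}-b_{2k}\le\deg A_{2k-1}$.

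So $(\deg A_{2k+1})_k$ is a non-increasing sequence of non-negative integers, hence eventually constant, say equal to $d$ for all $k\ge m_0$. The crux is to show $d=0$. If the degree is already stable from $m_0$ on, the inequality above is an equality for each $k\ge m_0$, which forces $a_{2k}=b_{2k}=1$ and therefore $A_{2k+1}=A_{2k}/(x(x+1))=1+M_1B_k$. Assume $d\ge 1$, so $A_{2k-1}\ne 1$ and $B_k\ne 0$. Since $A_{2k+1}$ is odd it is $\equiv 1\pmod{x(x+1)}$, so $x(x+1)\mid M_1B_k$, and as $\gcd(M_1,x(x+1))=1$ we get $x(x+1)\mid B_k$; writing $A_{2k+1}=1+x(x+1)B_{k+1}$ gives $B_{k+1}=M_1B_k/(x(x+1))$, whence $val_x(B_{k+1})=val_x(B_k)-1$. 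Then $\bigl(val_x(B_{m_0+j})\bigr)_{j\ge 0}$ would be a strictly decreasing sequence of non-negative integers — a contradiction. Hence $d=0$, i.e.\ $A_{2k+1}=1$ for all large $k$; and once $A_{2k-1}=1$ one gets $A_{2k}=1+M_1=x(x+1)$ and $A_{2k+1}=1$, so taking $m$ to be one more than the stabilization index establishes the conjecture.

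I expect the only real work to be in locating the right monovariant: the degree of the odd terms is non-increasing, and in the regime where it stalls the passage $A_{2k-1}\mapsto A_{2k+1}$ reduces, on the cofactors $B_k$, to $B_{k+1}=M_1B_k/(x(x+1))$, an operation that is defined only while $val_x(B_k)\ge 1$ and then strictly decreases $val_x(B_k)$ — which cannot persist. After that observation the rest is bookkeeping with degrees and $x$-adic (equivalently $(x+1)$-adic) valuations; notably, none of the analytic or probabilistic difficulties of the integer Collatz problem intervene. One should also dispatch the routine points that the three sequences are everywhere defined (each $A_{2k}=1+M_1A_{2k-1}$ has degree $\ge 2$, hence is nonzero, so $A_{2k+1}$ makes sense) and that $(x(x+1),1)$ is genuinely a cycle of the transformation.
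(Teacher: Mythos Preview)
Your proof is correct and takes a genuinely different route from the paper's. Both arguments begin the same way: since every odd polynomial is $\equiv 1\pmod{x(x+1)}$, one has $a_{2k},b_{2k}\ge 1$ for $k\ge 1$, whence $(\deg A_{2k+1})_k$ is non-increasing and eventually constant with $a_{2k}=b_{2k}=1$ in the stable regime. The divergence is in how one rules out a nontrivial stable degree. The paper first invokes pigeonhole to produce a cycle of some length $t$ among the odd terms, rewrites the recursion as a circulant linear system over $\mathbb{F}_2[x]$ with first row $[M_1,1+M_1,0,\ldots,0]$, computes $\det C=M_1^{\,t}+(1+M_1)^t\ne 0$, and concludes that the unique solution is $(1,\ldots,1)$. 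You instead track the cofactor $B_k$ in $A_{2k-1}=1+x(x+1)B_k$ and observe that, once $a_{2k}=b_{2k}=1$, the map collapses to $B_{k+1}=M_1B_k/(x(x+1))$, which strictly decreases $val_x(B_k)$; this cannot continue indefinitely unless $B_k=0$, i.e.\ $A_{2k-1}=1$.

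Your argument is more elementary (no cycle extraction, no determinant) and arguably more explanatory: it exhibits a second integer monovariant, $val_x(B_k)$, that must drop whenever the degree fails to. In fact your observation applies not only in the eventually-stable regime but at every step where the degree does not drop, so it even yields a quantitative statement the paper does not: the odd sequence can remain at a fixed degree $d\ge 2$ for at most $d-1$ consecutive terms (since $val_x(B_k)\le\deg B_k=d-2$). This gives the bound $r_A\le \sum_{d=2}^{\deg A}(d-1)+2 = \binom{\deg A}{2}+2$, a polynomial improvement over the paper's exponential $2^{\deg(A)-1}$ and in the spirit of the bound \eqref{polyBound} conjectured from numerics. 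Two minor points of hygiene: your ``equality for each $k\ge m_0$'' should read $k\ge m_0+1$ (at $k=m_0$ the previous odd degree may still exceed $d$), and the case $d=1$ is vacuous since $\mathcal{O}_1=\emptyset$; neither affects the argument.
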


We prove this conjecture in the following theorem.

\begin{theorem}\label{collatz0}
Let $A$ be a nonzero binary polynomial. Then the sequences of polynomials obtained from Collatz transformations are of finite length $r_A$. More precisely, these sequences are
\[
[A_2, \ldots, A_{2m-2}, x^2+x] \quad \text{and} \quad [A_1, \ldots, A_{2m-1},1],
\]
where $m \in \mathbb{N}^*$ and $r_A = m+1 \leq 2^{\deg(A)-1}$.
\end{theorem}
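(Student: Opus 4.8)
The plan is to control the degrees of the odd terms $A_1,A_3,A_5,\ldots$ and show that they are eventually driven down to $0$, i.e.\ down to the polynomial $1$, after which the displayed $2$-cycle $\bigl(x(x+1),1\bigr)$ appears automatically.

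First I would record the monotonicity. Because $A_{2k-1}$ is odd, $A_{2k-1}(0)=A_{2k-1}(1)=1$, and since $M_1(0)=M_1(1)=1$ the polynomial $A_{2k}=1+M_1A_{2k-1}$ vanishes at $0$ and at $1$; hence $x(x+1)\mid A_{2k}$, so $a_{2k}\ge 1$ and $b_{2k}\ge 1$. As $\deg(M_1A_{2k-1})=\deg A_{2k-1}+2>0$ we have $\deg A_{2k}=\deg A_{2k-1}+2$, whence
\[
\deg A_{2k+1}=\deg A_{2k-1}+2-a_{2k}-b_{2k}\le\deg A_{2k-1}.
\]
Since $\deg A_1\le\deg A$ as well, every odd term has degree $\le\deg A$, and the sequence $(\deg A_{2k+1})_k$ is non-increasing, hence eventually constant at some value $d^{*}$.

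The key step is to analyse when a step is \emph{flat}, meaning $\deg A_{2k+1}=\deg A_{2k-1}$; by the display above this is equivalent to $a_{2k}=b_{2k}=1$. Reducing $1+M_1A_{2k-1}$ modulo $x^2$ and modulo $(x+1)^2$ (equivalently, using $A_{2k}'=A_{2k-1}+M_1A_{2k-1}'$ in characteristic $2$) one checks that $a_{2k}=1\iff x^2\mid A_{2k-1}-1$ and $b_{2k}=1\iff (x+1)^2\mid A_{2k-1}-1$, so a flat step occurs exactly when $x^2(x+1)^2\mid A_{2k-1}-1$. In that case, writing $A_{2k-1}=1+x^2(x+1)^2R$ and using $1+M_1=x(x+1)$,
\[
A_{2k}=(1+M_1)+M_1x^2(x+1)^2R=x(x+1)\bigl(1+M_1x(x+1)R\bigr),
\]
and the second factor is odd (it equals $1$ at $x=0$ and at $x=1$), so $A_{2k+1}=1+M_1x(x+1)R$, giving $A_{2k+1}-1=M_1x(x+1)R$ and, since $\gcd(M_1,x)=1$,
\[
val_x(A_{2k+1}-1)=val_x(A_{2k-1}-1)-1.
\]
Hence along any run of consecutive flat steps starting from a term $\ne 1$, the nonnegative integer $val_x(A_{2k-1}-1)$ strictly decreases while staying $\ge 2$ (which is exactly what makes the run continue), so the run is finite. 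This dichotomy is the crux of the proof and the main obstacle; granting it, $d^{*}>0$ is impossible, because then every step from some point on would be flat with all inputs of degree $d^{*}>0$, hence $\ne 1$, producing an infinite flat run. So $d^{*}=0$; the unique odd polynomial of degree $0$ being $1$, there is a least index $2N+1$ with $A_{2N+1}=1$. Then $A_{2N+2}=x(x+1)$, $A_{2N+3}=1$, and for all $k\ge N+1$ the sequences equal the $2$-cycle $\bigl(x(x+1),1\bigr)$; this proves Conjecture~\ref{collatzconject} and the structural part of Theorem~\ref{collatz0}.

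For the length bound, note $r_A=N+1$. The terms $A_1,A_3,\ldots,A_{2N-1}$ are odd, nonconstant, and pairwise distinct: a coincidence $A_{2i+1}=A_{2j+1}$ with $i<j\le N-1$ would make $(A_{2k+1})_{k\ge i}$ periodic with all values $\ne 1$, contradicting $A_{2N+1}=1$. Together with $A_{2N+1}=1$ these are $N+1$ distinct odd polynomials of degree $\le\deg A$. A short count shows that for $n\ge 2$ the number of odd polynomials of degree $\le n$ is $1+\sum_{j=2}^{n}2^{j-2}=2^{n-1}$ (no odd polynomial has degree $1$; an odd polynomial of degree $j\ge 2$ has constant term $1$ and coefficients of $x,\ldots,x^{j-1}$ free subject to their sum being $1$, giving $2^{j-2}$ of them). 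Therefore $r_A=N+1\le 2^{\deg A-1}$. The remaining points — the small cases $\deg A\le 2$ and matching $N$ precisely with the index $m$ of the statement — are routine.
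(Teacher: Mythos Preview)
Your proof is correct, and the crucial step is genuinely different from the paper's.

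Both arguments share the easy part: for $k\ge 1$ one has $a_{2k},b_{2k}\ge 1$, so $\deg A_{2k+1}\le\deg A_{2k-1}$ and the degrees of the odd terms stabilise at some value $d^{*}$. The divergence is in how one rules out $d^{*}>0$.

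The paper argues as follows. Once the degrees are constant, pigeonhole gives a period $t$ with $A_{2(m+t)+1}=A_{2m+1}$. The relations $M_1A_{2m+2j-1}+(1+M_1)A_{2m+2j+1}=1$ for $j=1,\ldots,t$ form a circulant $t\times t$ linear system over $\mathbb{F}_2[x]$, with determinant $M_1^{t}+(1+M_1)^{t}\neq 0$; hence the unique solution is $(1,\ldots,1)$, forcing $d^{*}=0$.

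You instead characterise a \emph{flat} step ($a_{2k}=b_{2k}=1$) by the congruence $x^2(x+1)^2\mid A_{2k-1}-1$, compute explicitly that then $A_{2k+1}-1=M_1\,x(x+1)\,R$ where $A_{2k-1}-1=x^2(x+1)^2R$, and conclude that $val_x(A_{2k-1}-1)$ drops by exactly one along any flat run with terms $\neq 1$; so such a run is finite, contradicting an eventual infinite flat tail with $d^{*}>0$. This bypasses both the periodicity detour and the circulant determinant computation, and is more elementary and more quantitative: it actually bounds the length of a flat run by $val_x(A_{2k-1}-1)-1$. The paper's route, on the other hand, packages the endgame into a single clean linear-algebra identity and would generalise more readily to other choices of $M_1$.

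Your length bound is essentially the paper's: the same count $\#\mathcal{O}_d=2^{d-2}$ summed over $d\le\deg A$, together with the observation (which you make explicit and the paper leaves implicit) that the odd terms $A_1,A_3,\ldots,A_{2N-1},1$ are pairwise distinct.
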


The proof of Theorem \ref{collatz0} appears in Section \ref{theoremproof}.

The exponential upper bound for the length $r_A$ in the theorem seems too large, and we are unable to improve it.
In most cases in the literature \cite{Paran2023,Alon,Paran2025,Yucas}, the analogue of $r_A$ is proved to be bounded above by a polynomial in the degree of $A$. In these cases, 
$q=2$, $K = x$, and $R$ belongs to the set $\{0,1\}$ modulo $x$, while in our case 
$q=2$, $K = x(x+1)$, and $R$ belongs to $\{0,1,x,x+1\}$.

However, several computations suggest that here also $r_A$ is bounded by a polynomial in $\deg(A)$ (see Section \ref{exemples}). More precisely, Section \ref{exemples} contains details of the behavior of $r_A$ for sequences beginning with general polynomials $A$ of degree up to $35$. These computations motivate our conjecture as well as our comment on the upper bound of $r_A$. Moreover, in the same section we study $r_A$ in some special cases, including the case where $A$ is the trinomial $x^n+x+1$.

\section{Proof of Theorem \ref{collatz0}} \label{theoremproof}

First of all, we shall need some general results about the numbers of odd and even polynomials of a given degree $d \geq 2$.  
We let ${\mathcal{P}}_d$ denote the set of all polynomials of degree $d$ and consider:
\[
\begin{array}{l}
{\mathcal{P}}^{0,0}_d := \{S \in {\mathcal{P}}_d : S(0) = 0\}, \quad 
{\mathcal{P}}^{0,1}_d := \{S \in {\mathcal{P}}_d : S(0) = 1\}, \\[6pt]
{\mathcal{P}}^{1,0}_d := \{S \in {\mathcal{P}}_d : S(1) = 0\}, \quad 
{\mathcal{P}}^{1,1}_d := \{S \in {\mathcal{P}}_d : S(1) = 1\}, \\[6pt]
{\mathcal{O}}_d := \{S \in {\mathcal{P}}_d : S \text{ is odd}\} =  {\mathcal{P}}^{0,1}_d \cap {\mathcal{P}}^{1,1}_d.
\end{array}
\]

We have ${\mathcal{P}}_0 = {\mathcal{O}}_0 = \{1\}$ and ${\mathcal{O}}_1 = \emptyset$.

\begin{lemma} \label{oddpolyn}
\begin{itemize}
    \item[i)] The four sets ${\mathcal{P}}^{0,0}_d, {\mathcal{P}}^{0,1}_d, {\mathcal{P}}^{1,0}_d$, and ${\mathcal{P}}^{1,1}_d$ all have the same cardinality $2^{d-1}$.
    \item[ii)] The set ${\mathcal{O}}_d$ contains exactly $2^{d-2}$ polynomials if $d \geq 2$.
\end{itemize}
\end{lemma}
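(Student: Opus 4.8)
The plan is to count these sets directly by a combinatorial argument based on the degrees of freedom in choosing the coefficients of a polynomial $S \in \mathcal{P}_d$. Write $S = \sum_{i=0}^{d} c_i x^i$ with $c_d = 1$ (fixed, since $\deg S = d$) and $c_0,\ldots,c_{d-1} \in \mathbb{F}_2$ arbitrary; thus $|\mathcal{P}_d| = 2^d$. The condition $S(0) = c_0$ and the condition $S(1) = \sum_{i=0}^{d} c_i = 1 + \sum_{i=0}^{d-1} c_i$ are each a single linear constraint on the free coefficients $(c_0,\ldots,c_{d-1})$, so each of $\mathcal{P}^{0,0}_d, \mathcal{P}^{0,1}_d, \mathcal{P}^{1,0}_d, \mathcal{P}^{1,1}_d$ is a coset of a hyperplane in $\mathbb{F}_2^{d}$ and hence has exactly $2^{d-1}$ elements. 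That proves part~(i).

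For part~(ii), since $\mathcal{O}_d = \mathcal{P}^{0,1}_d \cap \mathcal{P}^{1,1}_d$, I would observe that this intersection is cut out by the two linear constraints $c_0 = 1$ and $\sum_{i=0}^{d-1} c_i = 0$. For $d \geq 2$ these two constraints are independent (the first involves only $c_0$, and one can satisfy $c_0 = 1$ while adjusting, say, $c_1$ to make the second hold, which is possible precisely because $d - 1 \geq 1$ so $c_1$ exists), so the solution set has codimension $2$ in $\mathbb{F}_2^d$ and therefore cardinality $2^{d-2}$. Alternatively, and perhaps more cleanly for exposition, one can use inclusion–exclusion or a direct bijective split: among the $2^{d-1}$ polynomials in $\mathcal{P}^{0,1}_d$, exactly half lie in $\mathcal{P}^{1,1}_d$ and half in $\mathcal{P}^{1,0}_d$, because toggling the coefficient $c_1$ (available since $d \geq 2$) is an involution on $\mathcal{P}^{0,1}_d$ that flips the value of $S(1)$ while preserving $S(0)$ and $\deg S$.

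I would present the hyperplane/counting argument as the main line, since it handles both parts uniformly and makes the role of the hypothesis $d \geq 2$ transparent (it is exactly what guarantees a second free coefficient $c_1$ beyond $c_0$, so that the two defining constraints of $\mathcal{O}_d$ are genuinely independent and not the same constraint or contradictory). The only genuine subtlety — hardly an obstacle — is to be careful about the leading coefficient: because $c_d = 1$ is forced, the ambient space of free parameters is $\mathbb{F}_2^{d}$ (coordinates $c_0,\ldots,c_{d-1}$), and the constraint $S(1) = v$ reads $\sum_{i=0}^{d-1} c_i = v + 1$ rather than $\sum_{i=0}^{d-1} c_i = v$; getting this bookkeeping right is what makes the counts come out to $2^{d-1}$ and $2^{d-2}$ rather than something off by a factor of two. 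One should also note the boundary cases explicitly: the claim in~(ii) requires $d \geq 2$ precisely because $\mathcal{O}_1 = \emptyset \neq 2^{-1}$, consistent with the remark $\mathcal{O}_1 = \emptyset$ already recorded before the lemma.
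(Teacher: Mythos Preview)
Your argument is correct. It differs from the paper's proof in both parts.

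For (i), the paper does not count coefficients directly; instead it exhibits the bijections $S \mapsto S+1$ (swapping $\mathcal{P}^{0,0}_d$ with $\mathcal{P}^{0,1}_d$ and $\mathcal{P}^{1,0}_d$ with $\mathcal{P}^{1,1}_d$) and $S(x) \mapsto S(x+1)$ (swapping $\mathcal{P}^{0,0}_d$ with $\mathcal{P}^{1,0}_d$), concludes that the four sets have equal size, and then uses $\mathcal{P}_d = \mathcal{P}^{0,0}_d \sqcup \mathcal{P}^{0,1}_d$ with $|\mathcal{P}_d| = 2^d$. For (ii), the paper argues by induction on $d$, using the decomposition $\mathcal{P}^{0,1}_d = \bigsqcup_{s} (x+1)^s \cdot \mathcal{O}_{d-s}$ coming from the $(x+1)$-valuation, so that $2^{d-1} = \#\mathcal{O}_d + 2^{d-3} + \cdots + 1 + 1$. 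Your linear-algebra count is shorter and handles both parts in one stroke, and it makes the role of the hypothesis $d \ge 2$ completely explicit; the paper's route is more coordinate-free and, in part (ii), foregrounds the factorization $S = (x+1)^s S_1$ with $S_1$ odd, which is exactly the structure exploited later by the Collatz transformation.
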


\begin{proof}
\begin{itemize}
    \item[i)] By the bijective map $S \mapsto S+1$, we have $\# {\mathcal{P}}^{0,0}_d = \# {\mathcal{P}}^{0,1}_d$ and $\# {\mathcal{P}}^{1,0}_d = \# {\mathcal{P}}^{1,1}_d$. Similarly, the bijection $S(x) \mapsto S(x+1)$ gives $\# {\mathcal{P}}^{0,0}_d = \# {\mathcal{P}}^{1,0}_d$. It remains to note that ${\mathcal{P}}_d$ is a disjoint union of ${\mathcal{P}}^{0,0}_d$ and ${\mathcal{P}}^{0,1}_d$, and that $\# {\mathcal{P}}_d = 2^d$.
    
    \item[ii)] By induction on $d$. The case $d = 2$ is trivial since ${\mathcal{O}}_2 = \{x^2 + x + 1\}$. Now, suppose that $\# {\mathcal{O}}_s = 2^{s-2}$ for $2 \leq s \leq d-1$. We observe that
    \[
    {\mathcal{P}}^{0,1}_d = \{(x+1)^s S_1 : 0 \leq s \leq d, \ s \neq d-1, \ S_1 \in {\mathcal{O}}_{d-s}\}.
    \]
    Hence,
    \[
    \# {\mathcal{P}}^{0,1}_d = \# {\mathcal{O}}_d + \# {\mathcal{O}}_{d-1} + \cdots + \# {\mathcal{O}}_2 + \# {\mathcal{O}}_0.
    \]
    Therefore, $2^{d-1} = \# {\mathcal{O}}_d + 2^{d-3} + \cdots + 1 + 1 = \# {\mathcal{O}}_d + 2^{d-2}$, and so $\# {\mathcal{O}}_d = 2^{d-2}$.
\end{itemize}
\end{proof}

For $k \geq 0$, we put $d_k := \deg(A_{2k})$ and $\ell_k := \deg(A_{2k+1})$.  
We then obtain the following lemmas.

\begin{lemma} \label{lesakbk}
We have $a_0, b_0 \geq 0$ and $a_{2k}, b_{2k} \geq 1$ for any $k \geq 1$.
\end{lemma}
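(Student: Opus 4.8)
The plan is to unwind the definitions and treat the two assertions separately. For the first, $a_0, b_0 \geq 0$ is immediate since by definition $a_0 = val_x(A_0)$ and $b_0 = val_{x+1}(A_0)$ for a nonzero polynomial $A_0 = A$, and a valuation is a non-negative integer. For the second, fix $k \geq 1$ and recall that $A_{2k} = 1 + M_1 A_{2k-1}$ where $M_1 = x^2+x+1$ and $A_{2k-1}$ is an odd polynomial; here $a_{2k} = val_x(A_{2k})$ and $b_{2k} = val_{x+1}(A_{2k})$. So I need to show $x \mid A_{2k}$ and $(x+1) \mid A_{2k}$, i.e. $A_{2k}(0) = 0$ and $A_{2k}(1) = 0$.

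First I would evaluate at $0$: $A_{2k}(0) = 1 + M_1(0) A_{2k-1}(0)$. Since $M_1(0) = 0^2+0+1 = 1$ and, because $A_{2k-1}$ is odd (hence has no factor $x$), $A_{2k-1}(0) = 1$, we get $A_{2k}(0) = 1 + 1\cdot 1 = 0$ in $\mathbb{F}_2$. Next I would evaluate at $1$: $A_{2k}(1) = 1 + M_1(1) A_{2k-1}(1)$. Since $M_1(1) = 1+1+1 = 1$ and $A_{2k-1}$ is odd (hence has no factor $x+1$), $A_{2k-1}(1) = 1$, so $A_{2k}(1) = 1 + 1\cdot 1 = 0$ in $\mathbb{F}_2$. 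Therefore both $x$ and $x+1$ divide $A_{2k}$, which forces $a_{2k} \geq 1$ and $b_{2k} \geq 1$.

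One small point to attend to is that these valuations are well-defined, i.e. that $A_{2k} \neq 0$: this holds because $\deg(A_{2k}) = \deg(M_1 A_{2k-1}) = 2 + \deg(A_{2k-1}) \geq 2 > 0$, using that $A_{2k-1}$ is a nonzero (odd) polynomial and that the leading term of $1 + M_1 A_{2k-1}$ is that of $M_1 A_{2k-1}$. The argument is entirely routine; there is no real obstacle, the only thing to be careful about is the bookkeeping that $A_{2k-1}$ being \emph{odd} means precisely $\gcd(A_{2k-1}, x(x+1)) = 1$, equivalently $A_{2k-1}(0) = A_{2k-1}(1) = 1$ over $\mathbb{F}_2$, which is exactly what makes the two evaluations collapse to $0$.
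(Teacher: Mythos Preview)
Your proof is correct and follows essentially the same approach as the paper: evaluate $A_{2k}$ at $t\in\{0,1\}$, using $M_1(t)=1$ and $A_{2k-1}(t)=1$ (since $A_{2k-1}$ is odd) to conclude $A_{2k}(t)=0$. You simply add a little extra care (the triviality of $a_0,b_0\ge 0$ and the nonvanishing of $A_{2k}$), but the substance is identical.
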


\begin{proof}
If $k \geq 1$, then both $x$ and $x+1$ divide $A_{2k}$, because for $t \in \{0,1\}$,
\[
A_{2k}(t) = 1+M_1(t) A_{2k-1}(t) = 1 + 1 = 0.
\]
\end{proof}

\begin{lemma} \label{lesdkellk}
We have $\ell_{k+1} \leq \ell_k \leq \deg(A)$, $d_k = \ell_{k-1} + 2$, and $d_k = \ell_k + a_{2k} + b_{2k}$.
\end{lemma}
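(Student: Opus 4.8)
The plan is to establish the three asserted relations separately, in an order that lets the later ones build on the earlier ones, and to close the argument with the monotonicity by combining them with Lemma~\ref{lesakbk}.

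First I would prove the middle identity $d_k = \ell_k + a_{2k} + b_{2k}$, which is immediate: by the very definition of $A_{2k+1}$ we have the factorization $A_{2k} = x^{a_{2k}}(x+1)^{b_{2k}} A_{2k+1}$, so taking degrees of both sides and noting that $x^{a_{2k}}(x+1)^{b_{2k}}$ has degree $a_{2k}+b_{2k}$ yields the claim. Next I would prove $d_k = \ell_{k-1} + 2$ from the recursion $A_{2k} = 1 + M_1 A_{2k-1}$ with $M_1 = x^2+x+1$ of degree $2$. Since $A_{2k-1}$ is odd it is in particular nonzero, hence $\deg(A_{2k-1}) = \ell_{k-1} \geq 0$ and $\deg(M_1 A_{2k-1}) = \ell_{k-1} + 2 \geq 2 > 0 = \deg(1)$; therefore the top-degree term of $A_{2k}$ comes from $M_1 A_{2k-1}$, giving $d_k = \deg(A_{2k}) = \ell_{k-1}+2$ (the boundary case $A_{2k-1}=1$ is consistent, since then $A_{2k} = x^2+x$ has degree $2$).

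With these two identities in hand, the monotonicity $\ell_{k+1} \leq \ell_k$ follows by writing, on the one hand, $d_{k+1} = \ell_k + 2$ (the second identity applied with index $k+1$) and, on the other hand, $d_{k+1} = \ell_{k+1} + a_{2k+2} + b_{2k+2}$ (the first identity applied with index $k+1$). Since $k+1 \geq 1$, Lemma~\ref{lesakbk} gives $a_{2k+2}, b_{2k+2} \geq 1$, so $\ell_k + 2 = \ell_{k+1} + a_{2k+2} + b_{2k+2} \geq \ell_{k+1} + 2$, i.e. $\ell_{k+1} \leq \ell_k$. Finally, for the base case one uses $A_0 = x^{a_0}(x+1)^{b_0} A_1$ to get $\ell_0 = \deg(A_1) = \deg(A) - a_0 - b_0 \leq \deg(A)$ (as $a_0, b_0 \geq 0$), and then the chain $\ell_k \leq \ell_{k-1} \leq \cdots \leq \ell_0 \leq \deg(A)$ gives $\ell_k \leq \deg(A)$ for all $k$.

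There is no genuine obstacle here: every step is a short degree computation, and the only points requiring a word of care are that $A_{2k-1}$ is nonzero (which holds because odd polynomials are by definition nonzero), so that the degree of $M_1 A_{2k-1}$ behaves additively, and the correct bookkeeping of indices when invoking Lemma~\ref{lesakbk} (one needs the even index $2k+2$, which is $\geq 2$, so the lemma applies).
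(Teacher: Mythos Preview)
Your proof is correct; the paper actually states this lemma without proof, treating the three relations as immediate from the definitions. Your argument spells out exactly the degree computations the reader is expected to supply, using Lemma~\ref{lesakbk} at the one point where it is needed, so there is nothing to compare or critique.
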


Since $(\ell_k)_k$ is a nonnegative and nonincreasing sequence, we obtain the following corollary.

\begin{corollary} \label{ellCV}
Both sequences $(\ell_k)_k$ and $(d_k)_k$ are convergent. We have
\[
\lim_{k \to \infty} \ell_k = \ell, \quad \lim_{k \to \infty} d_k = d, \quad \text{where } d = \ell + 2.
\]
\end{corollary}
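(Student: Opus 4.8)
The statement is an immediate consequence of Lemma \ref{lesdkellk}, so the plan is short. First I would check that $(\ell_k)_k$ is a sequence of \emph{nonnegative} integers, i.e.\ that the Collatz transformation never produces the zero polynomial. This follows by a one-line induction: $A_1$ is the odd part of $A_0 = A \neq 0$, hence $A_1 \neq 0$ and $\ell_0 = \deg(A_1) \geq 0$; and whenever $A_{2k-1} \neq 0$, the polynomial $A_{2k} = 1 + M_1 A_{2k-1}$ has degree $2 + \ell_{k-1} \geq 2$, so it is nonzero, and therefore so is its odd part $A_{2k+1}$. Thus $\ell_k = \deg(A_{2k+1}) \geq 0$ for every $k \geq 0$.

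Next, by Lemma \ref{lesdkellk} we have $\ell_{k+1} \leq \ell_k$ for all $k$, so $(\ell_k)_k$ is a nonincreasing sequence of nonnegative integers. Such a sequence is eventually constant, hence convergent; I would denote its limit by $\ell$. Finally, again by Lemma \ref{lesdkellk}, the relation $d_k = \ell_{k-1} + 2$ exhibits $(d_k)_k$ as a shift of $(\ell_k)_k$ translated by $2$, so it is likewise eventually constant and converges to $d := \ell + 2$, as claimed.

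There is no genuine obstacle here: all the real work is already contained in Lemmas \ref{lesakbk} and \ref{lesdkellk}. The only point deserving a sentence of care is the well-definedness of $\ell_k$ as a nonnegative integer, handled by the induction above. (The substantive assertion that the common limit is in fact $\ell = 0$, equivalently $d = 2$ — so that the sequences actually stabilize at $x(x+1)$ and $1$ — is \emph{not} part of this corollary and will be established in the proof of Theorem \ref{collatz0}.)
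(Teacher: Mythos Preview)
Your proposal is correct and matches the paper's approach exactly: the paper simply observes that $(\ell_k)_k$ is a nonnegative, nonincreasing sequence (hence convergent), and your only addition is the brief induction checking that each $A_{2k+1}$ is nonzero so that $\ell_k \geq 0$ is well-defined. The conclusion for $(d_k)_k$ via $d_k = \ell_{k-1} + 2$ is identical.
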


\begin{corollary} \label{stationnaire}
There exists $m \geq 0$ such that for any $k \geq m$ one has
\[
\ell_k = \ell, \quad d_k = d, \quad a_{2k} = b_{2k} = 1.
\]
\end{corollary}

\begin{proof}
The convergent sequence $(\ell_k)_k$ takes its values in the finite set $\{0,1,\ldots, \deg(A)\}$. Hence it is eventually constant.
\end{proof}

\begin{corollary} \label{samedegrees}
For any $k \geq m$ we have $\deg(A_{2k}) = d$ and $\deg(A_{2k+1}) = \ell$.
\end{corollary}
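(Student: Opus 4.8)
The plan is little more than a substitution: the statement is an immediate consequence of the definitions together with Corollary~\ref{stationnaire}. Recall that for every $k \geq 0$ we set $d_k := \deg(A_{2k})$ and $\ell_k := \deg(A_{2k+1})$, so these symbols are just names for the degrees in question. Corollary~\ref{stationnaire} provides an index $m \geq 0$ with $\ell_k = \ell$ and $d_k = d$ for all $k \geq m$. Unwinding the definitions of $d_k$ and $\ell_k$ then gives $\deg(A_{2k}) = d$ and $\deg(A_{2k+1}) = \ell$ for all $k \geq m$, which is exactly the claim.

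Accordingly, there is no real obstacle here; all the work has already been done upstream, and the present corollary is a pure repackaging step. The substance sits in Lemma~\ref{lesdkellk} (the monotonicity $\ell_{k+1} \leq \ell_k \leq \deg(A)$ together with the identities $d_k = \ell_{k-1} + 2$ and $d_k = \ell_k + a_{2k} + b_{2k}$), which forces $(\ell_k)_k$ to be nonincreasing and bounded; in Corollary~\ref{ellCV}, which turns this into convergence of $(\ell_k)_k$ and $(d_k)_k$ with $d = \ell + 2$; and in Corollary~\ref{stationnaire}, which upgrades convergence to eventual constancy because $(\ell_k)_k$ takes values in the finite set $\{0,1,\ldots,\deg(A)\}$. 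If one wanted to be fully self-contained one could merge all of these into a single short argument, but as organized here the only thing left to do is the substitution above.

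The reason it is worth isolating is that it fixes the stage for the counting argument behind Theorem~\ref{collatz0}: once $k \geq m$, every $A_{2k+1}$ is an odd polynomial of the \emph{fixed} degree $\ell$ and every $A_{2k}$ has the fixed degree $d = \ell + 2$, so Lemma~\ref{oddpolyn} controls how many distinct such polynomials can occur. Since the step $A_{2k-1} \mapsto A_{2k} = 1 + M_1 A_{2k-1} \mapsto A_{2k+1}$ is deterministic, a repetition among the $A_{2k+1}$ would force periodicity; the remaining — and genuinely harder — task, carried out later in this section, is to rule out every nontrivial period, i.e. to show $\ell = 0$ (equivalently $A_{2k+1} = 1$ and $A_{2k} = x(x+1)$ eventually), and to read off from the count the bound $r_A \leq 2^{\deg(A)-1}$.
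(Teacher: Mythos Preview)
Your proposal is correct and matches the paper's treatment: the paper states this corollary with no proof at all, since it is an immediate restatement of Corollary~\ref{stationnaire} via the definitions $d_k = \deg(A_{2k})$ and $\ell_k = \deg(A_{2k+1})$. Your additional commentary on the role of this corollary in the overall argument is accurate and helpful, though of course not required for the proof itself.
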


\begin{corollary} \label{cyclic}
There exists a positive integer $t \leq \deg(A)$ such that the polynomials $A_{2(m+t)}$ and $A_{2(m+t)+1}$ satisfy
\[
A_{2(m+t)} = A_{2m}, \quad A_{2(m+t)+1} = A_{2m+1}.
\]
\end{corollary}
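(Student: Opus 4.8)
The plan is to reduce the statement to the iteration of one affine map. By Corollary~\ref{stationnaire} we have $a_{2k}=b_{2k}=1$ for every $k\ge m$, so for $k\ge m$ the recursion degenerates: $A_{2k}=x(x+1)A_{2k+1}$, while $A_{2(k+1)}=1+M_1A_{2k+1}$ and then $A_{2(k+1)+1}=\bigl(1+M_1A_{2k+1}\bigr)/\bigl(x(x+1)\bigr)$, the final division introducing no further cancellation precisely because $a_{2(k+1)}=b_{2(k+1)}=1$. Writing $\phi(S):=\bigl(1+M_1S\bigr)/\bigl(x(x+1)\bigr)$, this says $A_{2(m+n)+1}=\phi^{(n)}(A_{2m+1})$ for all $n\ge 0$; hence it suffices to produce $t\ge 1$ with $\phi^{(t)}(A_{2m+1})=A_{2m+1}$, since then $A_{2(m+t)}=x(x+1)A_{2(m+t)+1}=x(x+1)A_{2m+1}=A_{2m}$ follows automatically.

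For the mere existence of such a $t$ one can argue softly: by Corollary~\ref{samedegrees} and Lemma~\ref{oddpolyn} the terms $A_{2k+1}$ with $k\ge m$ all lie in the finite set $\mathcal{O}_\ell$, and $\phi$ is injective ($\phi(S)=\phi(S')$ forces $M_1S=M_1S'$, hence $S=S'$), so the forward $\phi$-orbit of $A_{2m+1}$ inside $\mathcal{O}_\ell$ is a cycle and a return time exists. This only gives $t\le\#\mathcal{O}_\ell=2^{\ell-2}$, which is exponential; to get $t\le\deg(A)$ I would exploit the affine structure of $\phi$. Since $x(x+1)+M_1=1$ in $\mathbb{F}_2[x]$, one checks $\phi(S)+1=\frac{M_1}{x(x+1)}(S+1)$, whence by induction
\[
\phi^{(n)}(S)+1=\frac{M_1^{\,n}}{\bigl(x(x+1)\bigr)^{n}}\,(S+1)\qquad(n\ge 0).
\]
Applied with $S=A_{2m+1}$, the left-hand side $A_{2(m+n)+1}+1$ is a polynomial for every $n$, while $\gcd\bigl(M_1,x(x+1)\bigr)=1$; hence $\bigl(x(x+1)\bigr)^{n}$ divides $A_{2m+1}+1$ for all $n$, which is impossible unless $A_{2m+1}+1=0$. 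Therefore $A_{2m+1}=1$, so $\ell=0$, and by Corollary~\ref{samedegrees} $A_{2k+1}=1$, $A_{2k}=x(x+1)$ for all $k\ge m$; thus $t=1$ already works, and $t=1\le\deg(A)$ (the degenerate case $A=1$ aside).

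The step I expect to be the real obstacle is exactly this quantitative collapse: the finiteness-plus-injectivity argument is far too lossy, and what actually forces the orbit down to a single point is the affine normal form $\phi(S)+1=c(S+1)$ with $c=M_1/(x(x+1))$, together with the fact that $c$ has infinite multiplicative order in $\mathbb{F}_2(x)^{\times}$: a periodic orbit of length $t>1$ would yield $c^{t}=1$, i.e.\ $\bigl(x(x+1)\bigr)^{t}=M_1^{t}$, contradicting unique factorization in $\mathbb{F}_2[x]$ since $x(x+1)$ and $M_1$ are coprime and non-associate. Everything else is routine bookkeeping with the valuations $a_{2k},b_{2k}$ and the degree sequences already pinned down by the preceding corollaries.
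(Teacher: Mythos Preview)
Your argument is correct and in fact overshoots the target: you establish directly that $A_{2m+1}=1$, hence $t=1$, which the paper defers to the next result (Proposition~\ref{lesAjodd=1}). The paper's own proof of this corollary is a bare pigeonhole observation---the terms $A_{2k+1}$ for $k\ge m$ all lie in the finite set of odd polynomials of degree $\ell$, so a repetition must occur---which yields the existence of a period but not, by itself, the stated bound $t\le\deg(A)$; that bound is only secured afterwards when Proposition~\ref{lesAjodd=1} forces $t=1$ by writing the cycle as a $t\times t$ circulant linear system over $\mathbb{F}_2[x]$ with determinant $M_1^{\,t}+(1+M_1)^t\ne 0$ and unique solution $(1,\dots,1)$.

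Your route is genuinely different and more elementary. Rather than solve a linear system, you conjugate the iteration to the multiplication $S+1\mapsto c(S+1)$ with $c=M_1/(x(x+1))$, and then either the divisibility constraint $(x(x+1))^n\mid A_{2m+1}+1$ for every $n$, or the fact that $c$ has infinite multiplicative order in $\mathbb{F}_2(x)^{\times}$, forces $A_{2m+1}+1=0$ immediately. This collapses Corollary~\ref{cyclic} and Proposition~\ref{lesAjodd=1} into a single step with no determinant computation. The paper's circulant approach has the minor expository advantage of separating ``the orbit is periodic'' from ``the period is $1$'', but your affine-normal-form argument is shorter and arguably more transparent about \emph{why} no nontrivial cycle can exist.
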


\begin{proof}
For any $k \geq m$, the polynomial $A_{2k}$ (resp. $A_{2k+1}$) lies in the finite set of polynomials of degree $d$ (resp. $\ell$).
\end{proof}

\begin{proposition} \label{lesAjodd=1}
For any $k \geq m$ we have $A_{2k+1} = 1$, so that $\ell = 0$ and $t = 1$.
\end{proposition}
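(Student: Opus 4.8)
The plan is to exploit the cyclic structure guaranteed by Corollary \ref{cyclic} together with the degree bookkeeping of Lemma \ref{lesdkellk} and Corollary \ref{stationnaire}. Assume for contradiction that $\ell := \lim \ell_k \geq 1$, so that $d = \ell+2 \geq 3$. By Corollary \ref{stationnaire} we may pass to an index $m$ beyond which $a_{2k}=b_{2k}=1$ for all $k \geq m$; thus for $k \geq m$ we have $A_{2k} = x(x+1)A_{2k+1}$ with $\deg(A_{2k+1}) = \ell$, and $A_{2k+2} = 1 + M_1 A_{2k+1}$ with $\deg(A_{2k+2}) = \ell+2 = d$. First I would record that the map $\Phi$ sending an odd polynomial $S$ of degree $\ell$ to the odd polynomial obtained from $1 + M_1 S$ after dividing out exactly one factor of $x$ and one of $x+1$ — equivalently $\Phi(S) = \dfrac{1+M_1 S}{x(x+1)}$, which is forced to be a genuine polynomial of degree $\ell$ by the valuation-one property — is a well-defined self-map on the finite set $\mathcal{O}_\ell$ of odd polynomials of degree $\ell$, and that on the tail of our sequence $A_{2k+1} = \Phi(A_{2(k-1)+1})$. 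Corollary \ref{cyclic} then says the tail is purely periodic under $\Phi$.

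The heart of the argument is to show that $\Phi$ has no periodic point in $\mathcal{O}_\ell$ when $\ell \geq 1$, i.e.\ that iterating $\Phi$ from any starting odd polynomial of degree $\ell$ cannot return. The cleanest route I see is a \emph{descent via a secondary valuation}: on the cyclic tail, $a_{2k}=b_{2k}=1$ exactly, which is a very restrictive condition. Writing $S = A_{2k+1}$, the condition $\mathrm{val}_x(1+M_1 S) = 1$ means $x^2 \nmid 1+M_1S$, i.e.\ $1 + M_1 S \equiv x \pmod{x^2}$; since $M_1 = x^2+x+1 \equiv 1+x \pmod{x^2}$ this pins down $S \bmod x^2$, and similarly $\mathrm{val}_{x+1}(1+M_1S)=1$ pins down $S \bmod (x+1)^2$. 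So the periodic orbit lives inside a single residue class modulo $x^2(x+1)^2$ among odd polynomials of degree $\ell$. I would then look at the top of the polynomial: comparing leading behaviour in $\Phi$, the degree-$\ell$ coefficient is preserved (it is $1$), but I expect to be able to track the \emph{next} coefficient (or a suitable linear functional of the coefficients, such as the value of a derivative-like quantity, or $S$ evaluated at a carefully chosen element of $\overline{\F_2}$) and show it changes monotonically — in $\F_2$ "monotone" must mean something like: a fixed linear functional $\lambda$ satisfies $\lambda(\Phi(S)) = \lambda(S) + c$ with $c \neq 0$ forced by the $\mathrm{val}=1$ constraints — which is incompatible with periodicity. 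An alternative, possibly slicker, finish: show directly that $\Phi(S) = S$ has no solution of degree $\geq 1$ (a fixed point would give $x(x+1)S = 1 + M_1 S$, i.e.\ $(M_1 + x^2+x)S = 1$, i.e.\ $S = 1$, contradiction since $\deg S \geq 1$), and then rule out longer cycles by the same leading-coefficient/valuation argument applied around the cycle.

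Once $\mathcal{O}_\ell$ has no $\Phi$-periodic point for $\ell \geq 1$, we conclude $\ell = 0$. Then for all $k \geq m$, $A_{2k+1}$ is an odd polynomial of degree $0$, hence $A_{2k+1} = 1$; consequently $A_{2k+2} = 1 + M_1 \cdot 1 = x^2+x+1+1 = x^2+x = x(x+1)$, and indeed $a_{2k}=b_{2k}=1$, $d=2$, consistently. In particular $A_{2m+1}=1$ and the orbit has returned after one step, so $t = 1$. This gives exactly the claimed statement $A_{2k+1}=1$ for $k \geq m$, $\ell = 0$, $t=1$.

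The main obstacle I anticipate is the no-long-cycles part: a bare dimension count on the residue class mod $x^2(x+1)^2$ is not by itself enough, so the crux is finding the right monovariant — most likely a single $\F_2$-linear functional on coefficients, or evaluation at a primitive cube root of unity or similar, whose value is shifted by a nonzero constant at each application of $\Phi$ precisely because $\mathrm{val}_x$ and $\mathrm{val}_{x+1}$ are both exactly $1$. Pinning that functional down, and checking the shift is genuinely nonzero (not merely generically nonzero), is where the real work lies; everything else is the routine degree arithmetic already set up in Lemmas \ref{lesakbk}–\ref{lesdkellk} and their corollaries.
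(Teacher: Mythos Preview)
Your outline is sound up through the reduction to showing that the affine self-map $\Phi(S)=\dfrac{1+M_1S}{x(x+1)}$ on $\mathcal{O}_\ell$ has no periodic point when $\ell\geq 1$, and your fixed-point computation is correct. The gap is exactly where you flag it: you never produce the promised monovariant for cycles of length $t>1$, and in fact the specific mechanism you suggest cannot work. A single $\F_2$-linear functional $\lambda$ with $\lambda(\Phi(S))=\lambda(S)+c$, $c\neq 0$, would have $\lambda(\Phi^2(S))=\lambda(S)$ in characteristic $2$, so it rules out odd cycles but not $2$-cycles (or any even-length cycle). The residue-class pinning modulo $x^2(x+1)^2$ is a true observation but, as you note, not enough on its own.

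The paper bypasses the search for a monovariant entirely. Using $x(x+1)=M_1+1$, the recurrence on the periodic tail reads $M_1A_{2m+2j-1}+(M_1+1)A_{2m+2j+1}=1$ for $j=1,\dots,t$, with $A_{2m+2t+1}=A_{2m+1}$. This is a $t\times t$ linear system over $\F_2(x)$ with circulant matrix having first row $[M_1,\,M_1+1,\,0,\dots,0]$; its determinant is $M_1^t+(M_1+1)^t$, which is nonzero because $M_1=x^2+x+1$ is irreducible while $M_1+1=x(x+1)$, so the system has the unique solution $(1,\dots,1)$. Equivalently---and this is the one-line completion your own setup was missing---over the fraction field your map is affine with unique fixed point $1$: one checks $\Phi(S)-1=\dfrac{M_1}{M_1+1}\,(S-1)$, hence $\Phi^t(S)-1=\dfrac{M_1^t}{(M_1+1)^t}\,(S-1)$, and $\Phi^t(S)=S$ forces either $M_1^t=(M_1+1)^t$ (impossible) or $S=1$. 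This is the missing idea; once you see $\Phi$ is affine, no monovariant is needed.
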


\begin{proof}
For $k \geq m$ we have $a_{2k} = b_{2k} = 1$. Hence, the Collatz transformations give the following system:
\[
\left\{\begin{array}{l}
M_1 A_{2m+1} + (1+M_1) A_{2m+3} = 1,\\
M_1 A_{2m+3} + (1+M_1) A_{2m+5} = 1,\\
\quad \vdots \\
M_1 A_{2m+2t-3} + (1+M_1) A_{2m+2t-1} = 1,\\
M_1 A_{2m+2t-1} + (1+M_1) A_{2m+2t+1} = 1.
\end{array}\right.
\]
Since $A_{2m+2t+1} = A_{2m+1}$, we obtain a linear system of $t$ equations with coefficients in $\mathbb{F}_2[x]$ and
$t$ unknowns: $A_{2m+1}, \ldots, A_{2m+2t-1}$. Its matrix $C$ is circulant with first row $[M_1, 1+M_1, 0, \ldots, 0]$. The right-hand side is the transpose of $[1 \ldots 1]$.

By expanding along the first column of $C$, we find
\[
\det(C) = {M_1}^t + (1+M_1)^t,
\]
which is nonzero.

Thus, this system admits the unique solution $(1, \ldots, 1)$.
\end{proof}

\begin{corollary} \label{casell=0}
The even and odd sequences $E$ and $O$ are respectively
\[
E = [A_2, \ldots, A_{2m-2}, x^2+x], \quad O = [A_1, \ldots, A_{2m-1}, 1].
\]
Moreover, both $E$ and $O$ contain $m+1$ elements, with $m+1 \leq 2^{\deg(A)-1}$.
\end{corollary}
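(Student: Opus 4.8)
The plan is to read off the shape of the two sequences directly from Proposition \ref{lesAjodd=1}, and then to bound $m$ by a counting argument against Lemma \ref{oddpolyn}. Since $\ell=0$ and $t=1$ by Proposition \ref{lesAjodd=1}, every odd term $A_{2k+1}$ with $k\ge m$ has degree $\ell_k=\ell=0$, hence $A_{2k+1}=1$; feeding this into the defining recursion gives $A_{2k}=1+M_1A_{2k-1}=1+M_1=x^2+x$ as soon as $A_{2k-1}=1$. (Alternatively one may invoke Corollary \ref{samedegrees} and Lemma \ref{lesakbk}: for large $k$ the polynomial $A_{2k}$ is divisible by $x(x+1)$ and has degree $d=\ell+2=2$, so it equals $x(x+1)$.) Thus both Collatz sequences are eventually constant, with stable values $x^2+x$ and $1$; deleting the constant tails leaves exactly the two finite lists $E$ and $O$ displayed in the statement, and counting their entries gives $r_A=m+1$ in each case. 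For the bound it is convenient to take $m$ to be the least index for which $A_{2k+1}=1$ holds for all $k\ge m$.

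The key step is to show that the odd polynomials occurring strictly before stabilization, namely $A_1,A_3,\dots,A_{2m-1}$, are pairwise distinct. First, by minimality of $m$ none of them equals $1$: if $A_{2k+1}=1$ then $A_{2k+2}=1+M_1=x(x+1)$, so $A_{2k+3}=1$, and inductively $A_{2j+1}=1$ for all $j\ge k$, i.e. stabilization would already occur at step $k$. Now suppose $A_{2i+1}=A_{2j+1}$ with $i<j\le m-1$. Since $A_{2k+2}=1+M_1A_{2k+1}$ and $A_{2k+3}$ is obtained from $A_{2k+2}$ by dividing out the full power $x^{a_{2k+2}}(x+1)^{b_{2k+2}}$, an immediate induction yields $A_{2(k+j-i)+1}=A_{2k+1}$ for all $k\ge i$. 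Hence the sequence $(\ell_k)_{k\ge i}$ is periodic; being non-increasing by Lemma \ref{lesdkellk}, it is constant for $k\ge i$. This contradicts $\ell_{m-1}\ge 2>0=\ell_m$, where $\ell_{m-1}\ge 2$ because $A_{2m-1}$ is odd, different from $1$, and no odd polynomial has degree $1$ (as $\mathcal{O}_1=\emptyset$).

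Given distinctness, the bound follows by counting. Each $A_{2k+1}$ with $0\le k\le m-1$ is an odd polynomial of degree $\ell_k$ satisfying $2\le \ell_k\le \ell_0=\deg A_1\le \deg A$ (using that $(\ell_k)$ is non-increasing and $\deg A_1\le\deg A_0=\deg A$). By Lemma \ref{oddpolyn}(ii) there are exactly $\sum_{d=2}^{n}2^{d-2}=2^{n-1}-1$ odd polynomials of degree between $2$ and $n:=\deg A$, so the $m$ distinct polynomials $A_1,\dots,A_{2m-1}$ force $m\le 2^{n-1}-1$, that is $r_A=m+1\le 2^{\deg A-1}$. I expect the one genuinely delicate point to be the distinctness of $A_1,\dots,A_{2m-1}$ — making precise that a single repetition would propagate to a true period of the whole tail, which is incompatible with the final strict drop of the degree sequence to $0$; the identification of $E$ and $O$ and the concluding count are then routine.
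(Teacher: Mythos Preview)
Your proof is correct and follows the same overall strategy as the paper: deduce $\ell=0$ and $d=2$ from Proposition \ref{lesAjodd=1}, read off the shape of $E$ and $O$, and bound $m+1$ by counting the odd polynomials of degree at most $\deg(A)$ via Lemma \ref{oddpolyn}(ii). The paper's version of the counting step simply asserts that $O$ ``contains at most all odd polynomials'' of each degree down to $0$, tacitly assuming that the pre-stabilization terms $A_1,A_3,\dots,A_{2m-1}$ are pairwise distinct; you make this explicit by showing that a repetition would force the non-increasing degree sequence $(\ell_k)$ to be eventually periodic and hence constant, contradicting the strict drop $\ell_{m-1}\ge 2>0=\ell_m$. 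This extra paragraph is a genuine improvement in rigor over the paper's argument, but the underlying idea and the final arithmetic are identical.
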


\begin{proof}
We have just seen that $\ell = 0$ and $t=1$. Thus, $d = 2$, $A_{2m+1} = 1$, and $A_{2m} = x^2+x$.  
The odd sequence $O$ contains at most all odd polynomials of degree $\deg(A)$, all odd polynomials of degree $\deg(A)-1, \ldots$, together with the polynomials $x^2+x+1$ and $1$.  

Thus, by Lemma \ref{oddpolyn}-ii), we obtain
\[
m+1 \leq 2^{\deg(A)-2} + 2^{\deg(A)-3} + \cdots + 2 + 1 + 1 = 2^{\deg(A)-1}.
\]
\end{proof}

\section{Some computer computations} \label{exemples}

After $7$ months and $3$ days of computation with {\tt GP-Pari}, we were able to determine, for every $n$ from $1$ to $35$, the maximal length $f(n)$ of the sequences $(A_j)$ that begin with some polynomial of degree $n$:

$$\begin{array}{|l|l|}
\hline
[n,f(n)]&[[1,0],[2,1],[3,2],[4,3],[5,4],[6,8],[7,10],[8,11],[9,12],[10,16]]\\
\hline
&[[11,18],[12,20],[13,22],[14,24],[15,28],[16,32],[17,36],[18,38]]\\
&[[19,40],[20,42],[21,46],[22,52],[23,54],[24,55],[25,60],[26,62]]\\
&[[27,66],[28,67],[29,70],[30,74],[31,76],[32,78],[33,84],[34,88]]\\
&[[35,92]]\\
\hline
\end{array}$$

From these computations, it appears plausible that $f(n)$ is bounded above by a polynomial in $n$, thus improving upon our exponential bound in Theorem \ref{collatz0}.

For each integer $n$ from $2$ to $38$, we also computed (with {\tt GP-Pari}, in about $5$ months and $8$ days) the number $g(n)$ of sequences $(A_j)$ that  
(a) begin with some odd polynomial $A_1=A$ of degree $n$, and  
(b) have the property that all elements $A_1,A_3,\ldots ,A_{2h+1}$ remain of degree $n$, and that the sequence is maximal with this property:

$$\begin{array}{|l|l|}
\hline
[n,g(n)]&[[2,1],[3,1],[4,2],[5,2],[6,3],[7,3],[8,4],[9,4],[10,5]]\\
\hline
&[[11,5],[12,6],[13,6],[14,7],[15,7],[16,8],[17,8],[18,9]]\\
&[[19,9],[20,10],[21,10],[22,11],[23,11],[24,12],[25,12],[26,13]]\\
&[[27,13],[28,14],[29,14],[30,15],[31,15],[32,16],[33,16],[34,17]]\\
&[[35,17],[36,18],[37,18],[38,19]]\\
\hline
\end{array}$$

From these values of $n$, one may conjecture that $g(n)= \lfloor n/2 \rfloor$.  
If the conjecture holds, we can upper bound the length $r_A$ of the sequence above by a polynomial in $n$ as follows:
\begin{equation}
\label{polyBound}
r_A \leq 2(n/2)+2((n-1)/2)+\cdots+2(1/2) = \tfrac{n(n+1)}{2}.
\end{equation} 

This improves upon the exponential upper bound of Theorem \ref{collatz0}. However, for example, when $n=21$, so that $n(n+1)/2=231$, an explicit polynomial $A$ with $g(21)=10$ has length $r_A=32$, while $f(21)=46$.

Concerning some special trinomials, after treating many cases by direct computation (with {\tt Maple}), we may state some examples as well as further conjectures in the next subsections.

\subsection{Example 1} \label{exemple1}
In this section, we take $A :=T_n= (x^2+x+1)^n +1 = {M_1}^n +1$, for $n \geq 1$, so that $A$ is even. Put $n=2^r u$, where $r \geq 0$ and $u$ is odd. We have
$$A = (M_1+1)^{2^r} \cdot (M^{u-1}+\cdots+M+1)^{2^r}.$$  
Hence, the first polynomial in the odd sequence is $A_1 = (M^{u-1}+\cdots+M+1)^{2^r}$.\\
On the other hand, if $n \geq 2$, then there exists a unique positive integer $r$ such that $2^{r-1} < n \leq 2^r$. Thus, we may write $n = 2^r - j$, with $0 \leq j \leq 2^{r-1}-1$. \\

Hence, we establish the following conjecture:

\begin{conjecture} \label{conjectpartcase1}
Let $A = {M_1}^{2^r-j} +1$ where $r \geq 1$ and $0 \leq j \leq 2^{r-1}-1$. Then, the length of the odd sequence of $A$ is $j+1$.
\end{conjecture}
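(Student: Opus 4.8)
The plan is to push the entire construction into $\F_2[T]$ via the substitution $T\leftrightarrow M_1=x^2+x+1$, using the identity $M_1+1=x(x+1)$. If $P\in\F_2[T]$ factors as $P=(T+1)^eQ$ with $Q(1)=1$, then $P(M_1)=\bigl(x(x+1)\bigr)^eQ(M_1)$, and $Q(M_1)$ is odd since it takes the value $Q(1)=1$ at both $x=0$ and $x=1$; hence $val_x(P(M_1))=val_{x+1}(P(M_1))=e=v_{T+1}(P)$ and the odd part of $P(M_1)$ is $Q(M_1)$. Writing the odd terms of the Collatz sequence as $A_{2k-1}=P_k(M_1)$, this shows $A_{2k+1}=\phi(P_k)(M_1)$, where
\[
\phi(P)\;:=\;\frac{1+TP}{(T+1)^{v_{T+1}(1+TP)}}
\]
is a self-map of $\{P\in\F_2[T]:P(1)=1\}$ fixing $1$. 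Since $A=M_1^{\,n}+1$ with $n=2^au$ ($u$ odd) gives $A_1=S_u(M_1)^{2^a}$ with $S_u:=1+T+\cdots+T^{u-1}$ (the factorization of $A$ recalled just before the statement), the length of the odd sequence of $A$ equals $1+\mu\bigl(S_u^{2^a}\bigr)$, where $\mu(P)$ denotes the least $k\ge 0$ with $\phi^k(P)=1$ — finite by Theorem~\ref{collatz0}.

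Next I would prove the \emph{squaring lemma}: $\phi^2(P^2)=\phi(P)^2$ for every $P$ with $P(1)=1$, whence $\mu(P^2)=2\,\mu(P)$ and inductively $\mu(P^{2^a})=2^a\,\mu(P)$. The key point is that $1+TP^2$ has a \emph{simple} root at $T=1$: its formal derivative in characteristic $2$ is $P^2$, which equals $1$ at $T=1$. So $\phi(P^2)=(1+TP^2)/(T+1)=:Q$; then $1+TQ=(1+T^2P^2)/(T+1)=(1+TP)^2/(T+1)$, whose $(T+1)$-valuation is $2f-1$ with $f:=v_{T+1}(1+TP)$, giving $\phi(Q)=(1+TP)^2/(T+1)^{2f}=\phi(P)^2$. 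The identity $\mu(P^2)=2\mu(P)$ follows once one also notes that $\phi(R^2)=1$ forces $R=1$ (directly from $\phi(R^2)=(1+TR^2)/(T+1)$), so $\phi^{2j}(P^2)=(\phi^jP)^2$ and $\phi^{2j+1}(P^2)=\phi((\phi^jP)^2)$ both hit $1$ only when $j\ge\mu(P)$, i.e.\ the orbit of $P^2$ first reaches $1$ at the even step $2\mu(P)$.

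For the polynomials $S_u$ I would use the recursion coming from $1+TS_u=S_{u+1}$: for odd $u\ge 3$, writing $u+1=2^am$ with $m$ odd ($a\ge 1$), one has $S_{u+1}=(T^{u+1}+1)/(T+1)=(T+1)^{2^a-1}S_m^{2^a}$, hence $\phi(S_u)=S_m^{2^a}$ and, by the squaring lemma, $\mu(S_u)=1+2^a\mu(S_m)$. One then proves by strong induction on the odd integer $u$ that $\mu(S_u)=2^{\rho(u)}-u$, where $\rho(u)=\lceil\log_2 u\rceil$ (so $2^{\rho(u)-1}<u\le 2^{\rho(u)}$, $\rho(1)=0$). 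The base case $\mu(S_1)=\mu(1)=0$ is immediate; for $u\ge 3$ the inductive hypothesis applied to $m<u$ gives $\mu(S_u)=1+2^a\bigl(2^{\rho(m)}-m\bigr)=2^{a+\rho(m)}-u$, and one finishes with the purely arithmetic identities $a+\rho(m)=\rho(u+1)=\rho(u)$ — the first because $u+1=2^am$ has bit-length $a+\rho(m)$, the second because an odd $u\ge 3$ is not a power of $2$, so $2^{\rho(u)-1}<u<u+1\le 2^{\rho(u)}$.

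Assembling everything: for $A=M_1^{\,n}+1$ with $n=2^r-j$ and $0\le j\le 2^{r-1}-1$ — so that $r=\lceil\log_2 n\rceil$ — write $n=2^au$ with $u$ odd; then $a+\rho(u)=r$ (because $2^{a+\rho(u)-1}<n\le 2^{a+\rho(u)}$), and the length of the odd sequence equals $1+\mu\bigl(S_u^{2^a}\bigr)=1+2^a\mu(S_u)=1+2^a\bigl(2^{\rho(u)}-u\bigr)=1+2^r-n=j+1$. I expect the only genuinely non-routine step to be the reduction in the first paragraph — recognizing that the whole dynamics lives inside $\F_2[T]$ through $T\leftrightarrow x^2+x+1$; after that, the squaring lemma and the $S_u$-recursion are short inductions, the one mildly delicate computation being the simple-root fact $v_{T+1}(1+TP^2)=1$.
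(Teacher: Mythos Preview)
The paper does not prove this statement at all: it is posed as Conjecture~\ref{conjectpartcase1} and supported only by the table of odd-degree sequences for $n\in\{9,\ldots,16\}$. There is therefore no ``paper's own proof'' to compare against --- your proposal goes strictly beyond what the paper establishes.

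Your argument is correct and actually settles the conjecture. The reduction in the first paragraph is the decisive observation: because $M_1+1=x(x+1)$ and $M_1(0)=M_1(1)=1$, the subring $\F_2[M_1]\cong\F_2[T]$ is closed under the Collatz transformations starting from any $A=P(M_1)$, with the $x$- and $(x+1)$-valuations both collapsing to the single $(T+1)$-valuation. Once the problem is rewritten as the one-variable dynamics $\phi(P)=(1+TP)/(T+1)^{v_{T+1}(1+TP)}$ on $\{P\in\F_2[T]:P(1)=1\}$, the squaring lemma $\phi^2(P^2)=\phi(P)^2$ (via the simple-root computation $v_{T+1}(1+TP^2)=1$) and the recursion $\phi(S_u)=S_m^{2^a}$ for $u+1=2^a m$ give $\mu(S_u)=2^{\lceil\log_2 u\rceil}-u$ by a clean strong induction; I checked the arithmetic identities $a+\rho(m)=\rho(u+1)=\rho(u)$ and the assembly $1+2^a\mu(S_u)=1+2^r-n=j+1$, and they all hold. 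The one point worth stating slightly more explicitly is that $\F_2[T]\to\F_2[x]$, $T\mapsto M_1$, is injective (the powers of $M_1$ have distinct degrees), so the representation $A_{2k-1}=P_k(M_1)$ is unambiguous --- but this is routine. In short: your write-up constitutes a genuine proof of a result the paper only conjectures.
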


For instance, below we show the odd degree sequences for $T_n = {M_1}^n+1$, with $n \in \{9,\ldots,16\}$ so that $n = 2^4 -j$, $0 \leq j \leq 7$. Here, all lengths are smaller than $n=\deg(T_n)/2$:

$$\begin{array}{|l|c|c|}
\hline
n&\text{Sequence associated with $T_n$}&\text{Length}\\
\hline
9&[16, 16, 16, 16, 16, 16, 16, 0]&8\\
10&[16, 16, 16, 16, 16, 16, 0]&7\\
11&[20, 16, 16, 16, 16, 0]&6\\
12&[16, 16, 16, 16, 0]&5\\
13&[24, 24, 24, 0]&4\\
14&[24, 24, 0]&3\\
15&[28, 0]&2\\
16&[0]&1\\
\hline
\end{array}$$

\subsection{Other small lengths} \label{othersmall}

We now give the odd degree sequences for $T_n = x^n+x+1$, $U_n = x^n+x^{n-1}+1$, and $S_n= x^n+x^7+x^3+1$, with $n \in \{31,32,33,34\}$. Here, all lengths are smaller than $\deg(A)/2+2$. Note that $S_n$ is even.

$$\begin{array}{|l|l|l|}
\hline
n&\text{Sequence associate with $T_n$}&\text{Length}\\
\hline
31&[31, 29, 24, 24, 16, 16, 16, 16, 0]&9\\
32&[32, 28, 24, 24, 16, 16, 16, 16, 0]&9\\
33&[33, 31, 30, 30, 28, 28, 28, 28, 24, 24, 24, 24, 24, 24, 24, 24, 0]&17\\
34&[34, 31, 30, 30, 28, 28, 28, 28, 24, 24, 24, 24, 24, 24, 24, 24, 0]&17\\
\hline
\end{array}$$

$$\begin{array}{|l|l|l|}
\hline
n&\text{Sequence associate with $U_n$}&\text{Length}\\
\hline
31&[31, 28, 24, 24, 16, 16, 16, 16, 0]&9\\
32&[32, 31, 30, 30, 28, 28, 28, 28, 24, 24, 24, 24, 24, 24, 24, 24, 0]&17\\
33&[33, 31, 30, 30, 28, 28, 28, 28, 24, 24, 24, 24, 24, 24, 24, 24, 0]&17\\
34&[34, 33, 30, 30, 28, 28, 28, 28, 24, 24, 24, 24, 24, 24, 24, 24, 0]&17\\
\hline
\end{array}$$

$$\begin{array}{|l|l|l|}
\hline
n&\text{Sequence associate with $S_n$}&\text{Length}\\
\hline
31&[30, 28, 28, 28, 28, 28, 24, 24, 0]&9\\
32&[28, 27, 24, 23, 16, 16, 16, 16, 0]&9\\
33&[32, 31, 22, 22, 16, 16, 16, 16, 0]&9\\
34&[32, 32, 30, 30, 27, 27, 27, 25, 24, 24, 24, 24, 24, 24, 24, 24, 0]&17\\
\hline
\end{array}$$
We have two conjectures for $A = x^n+x+1$ when $n \geq 4$:

\begin{conjecture} \label{proppartcase}~\\
Let $s$ be the greatest positive integer such that  $n-2^{s+1} \geq 0$. Then, for any positive integer $t \leq s-1$, the odd sequence contains $2^t$ polynomials which have the same degree $d_t$.
In particular, $d_1 = \deg(A_5) = \deg(A_7)$ and  $d_2 = \deg(A_9) = \deg(A_{11}) = \deg(A_{13})=\deg(A_{15})$.
\end{conjecture}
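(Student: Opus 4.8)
By Theorem~\ref{collatz0} the sequence attached to $A=x^n+x+1$ is finite and reaches $1$, so the content of Conjecture~\ref{proppartcase} is the internal ``plateau'' structure of the odd degree sequence $[\ell_0,\ell_1,\ell_2,\dots]$ with $\ell_k=\deg(A_{2k+1})$. By Lemma~\ref{lesdkellk} one has $\ell_k=\ell_{k-1}+2-a_{2k}-b_{2k}$, so, using also Lemma~\ref{lesakbk}, the degree is preserved at a step $k\ge 1$ exactly when $a_{2k}=b_{2k}=1$, and drops strictly otherwise. In the computed examples the $2^t$ polynomials of common degree $d_t$ are consecutive, forming the block of odd polynomials $A_{2k+1}$ with $2^t\le k\le 2^{t+1}-1$; I would prove this stronger statement, which for each $1\le t\le s-1$ (with $s$ essentially $\lfloor\log_2 n\rfloor-1$) is equivalent to: $a_{2k}=b_{2k}=1$ for all $k$ with $2^t+1\le k\le 2^{t+1}-1$, together with $a_{2^{t+1}}+b_{2^{t+1}}\ge 3$. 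The plan is to produce an explicit recursive normal form for the polynomials $A_{2k+1}$ as functions of $n$ and $k$, and to read the valuations $a_{2k}=\mathrm{val}_x(1+M_1A_{2k-1})$ and $b_{2k}=\mathrm{val}_{x+1}(1+M_1A_{2k-1})$ off it.

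The first few steps already fix the shape of the normal form. Since $A$ is odd, $A_1=x^n+x+1$ and $A_2=1+M_1A_1=x^{n+2}+x^{n+1}+x^n+x^3=x^3(M_1x^{n-3}+1)$, so $a_2=3$; substituting $y=x+1$, under which $M_1$ is invariant, a short computation of the coefficient of $y$ gives $b_2=1$ when $n$ is odd, whence $A_3=(M_1x^{n-3}+1)/(x+1)=x^{n-2}+(1+x+\cdots+x^{n-4})$, a leading monomial plus a full ``repunit block''. I would then prove by induction on $k$ that every $A_{2k+1}$ lies in a fixed, finitely parametrised family $\mathcal{F}$ --- tentatively, $\mathbb{F}_2$-combinations of a monomial, a power $M_1^{j}$, and a shifted repunit $(x^m+1)/(x+1)$, with parameters recording part of the binary expansion of $n$ --- that $\mathcal{F}$ is closed under the operation $f\mapsto (1+M_1f)/(x^{\mathrm{val}_x}(x+1)^{\mathrm{val}_{x+1}})$, and that on $\mathcal{F}$ the two valuations are given by explicit formulas in the parameters, with $\mathrm{val}_{x+1}$ computed via $y=x+1$ and Lucas' theorem on the binary coefficients of $(y+1)^m$. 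Once such a normal form is in hand the degree recursion is explicit, and the doubling should fall out of a carry mechanism: the repunit block slides one binary position at a time, each slide being a step with $(a_{2k},b_{2k})=(1,1)$ that preserves the degree, until it meets a set bit of $n$ and a carry propagates, forcing $\mathrm{val}_{x+1}\ge 2$ and a drop to the next plateau; the number of carry-free steps between consecutive drops is governed by successive binary digits of $n$ and doubles from one plateau to the next, yielding the lengths $2,4,\dots,2^{s-1}$. A variant worth trying is a direct self-similarity: that the odd sequence of $x^n+x+1$ for $n$ in a dyadic range $2^{s+1}<n\le 2^{s+2}$ reproduces, with each plateau length doubled and prefixed by the two entries $n$ and $\deg(A_3)$, the sequence for a ``halved'' parameter, which would allow an induction on $s$.

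The main obstacle is exactly the existence and closure of the normal form, together with the uniform valuation computation: nothing \emph{a priori} keeps $A_{2k+1}$ sparse or structured, and the computed data show that the between-plateau drops are \emph{not} uniform --- $31\to 29\to 24\to 16$ for $n=31$ versus $33\to 31\to 30\to 28\to 24$ for $n=33$ --- so $\mathcal{F}$ must encode enough of the binary expansion of $n$ to account for these differences while remaining stable under the transformation, and the $\mathrm{val}_{x+1}$ computation will need a careful case split (beginning with the parity of $n$, already visible in $b_2$). A sensible order of attack is: (i) settle the $t=1$ case $d_1=\deg(A_5)=\deg(A_7)$ by explicitly computing $A_3,\dots,A_7$ from the formula above, split only by $n\bmod 4$, so as to pin down $\mathcal{F}$; (ii) prove the closure of $\mathcal{F}$ and the valuation formulas by induction; (iii) extract the degree recursion and match it against the dyadic plateau structure. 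I expect steps (i)--(ii) to carry essentially all the difficulty, with (iii) being bookkeeping.
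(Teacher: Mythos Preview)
The paper does not prove this statement: it is explicitly labelled \emph{Conjecture}~\ref{proppartcase} and left open. The only deduction the paper makes from it is the short paragraph showing that Conjecture~\ref{theorempartcase} would follow from Conjecture~\ref{proppartcase} by summing $1+1+2+2^2+\cdots+2^{s-1}+1=2^s+1$. So there is no ``paper's own proof'' to compare your proposal against.

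As for the proposal itself, you are candid that it is a plan rather than a proof, and the plan is reasonable in outline: the reduction to the condition $a_{2k}=b_{2k}=1$ via Lemma~\ref{lesdkellk} is correct, and your computation of $A_2=x^3(M_1x^{n-3}+1)$ and of $A_3$ for odd $n$ checks against the tables (e.g.\ the drops $31\to29$ and $33\to31$ match $a_2+b_2=4$). But the heart of the matter---the existence of a finitely parametrised family $\mathcal{F}$ closed under the Collatz step, with explicit valuation formulas---is asserted rather than established, and you yourself flag that the non-uniform drops between plateaux (visible already in the $n=31$ versus $n=33$ data) mean $\mathcal{F}$ must carry substantial information about the binary expansion of $n$. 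Until step~(ii) of your plan is actually carried out, this remains a heuristic; the paper's authors evidently did not see how to complete it either, which is why the statement stands as a conjecture.
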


\begin{conjecture} \label{theorempartcase}
Let $s$ be the greatest positive integer such that  $n-2^{s+1} \geq 0$. Then,  both the even and odd sequences of $A$ have length $2^s+1$.
\end{conjecture}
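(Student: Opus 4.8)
Fix $n$ with $2^{s+1} < n \le 2^{s+2}$; the strict lower inequality is what the computed data force (for $n=32$ the length is $9 = 2^{3}+1$, so $s=3$), and it amounts to $s = \lceil\log_2 n\rceil - 2$. The plan rests on three facts about $M_1 = x^2+x+1$: (i) $1 + M_1^{2^{j}} = (x(x+1))^{2^{j}}$ for all $j \ge 0$; (ii) $x(x+1) \equiv 1 \pmod{M_1}$; and (iii), the crucial one, that whenever two successive odd terms $A_{2k-1},A_{2k+1}$ have the same degree one necessarily has $a_{2k}=b_{2k}=1$ (by Lemmas~\ref{lesakbk} and~\ref{lesdkellk}), so on a maximal constant-degree stretch — call it a \emph{plateau} — the super-step $A_{2k-1}\mapsto A_{2k+1}$ coincides with the affine map $\phi(S) = (1+M_1 S)/(x(x+1))$. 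Because $M_1/(x(x+1)) - 1 = 1/(x(x+1))$, the iterates of $\phi$ telescope:
\[
\phi^{j}(S) \;=\; 1 + \frac{M_1^{\,j}}{(x(x+1))^{j}}\,(S+1).
\]

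This formula pins down plateau lengths. If $B$ heads a plateau and $B+1 = x^{a}(x+1)^{b}U$ with $\gcd(U,x(x+1))=1$, then $\phi^{j}(B) = 1 + M_1^{\,j}\,x^{a-j}(x+1)^{b-j}U$ is odd for $j < \min(a,b)$ but ceases to be odd at $j = \min(a,b)$; a short computation then shows the next even term has $val_x>1$, i.e.\ the degree genuinely drops. Hence a plateau headed by $B$ has length exactly $\min\bigl(val_x(B+1),\,val_{x+1}(B+1)\bigr)$. Since $A(0)=A(1)=1$ gives $A_1 = A$, Conjecture~\ref{theorempartcase} (together with its companion Conjecture~\ref{proppartcase}) becomes equivalent to the following assertion about $A = x^n+x+1$: the odd sequence is a concatenation of plateaus whose heads $B^{(0)}=A_1, B^{(1)}, B^{(2)},\dots$ satisfy
\[
\min\bigl(val_x(B^{(i)}+1),\,val_{x+1}(B^{(i)}+1)\bigr) \;=\; 1,\,1,\,2,\,4,\,\dots,\,2^{s-1},
\]
after which the term $A_{2^{s+1}+1}$ equals $1$; summing the plateau lengths then gives $1+1+(2+4+\cdots+2^{s-1})+1 = 2^{s}+1$.

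What remains is to prove that assertion, and I would do so by induction on $s$, exploiting the self-similarity already visible in $A_2 = 1+M_1 A_1 = x^{3}\bigl(x^{n-3}M_1+1\bigr)$: for instance $x^{8}+x+1 \rightsquigarrow x^{4}+x+1 \rightsquigarrow 1$, and in general a bounded number of super-steps carries $x^{n}+x+1$ to a polynomial of the same essential shape but of roughly half the degree. The valuations of $B^{(i)}+1$ (and of the relevant intermediate polynomials) at $x$ and at $x+1$, together with the degree drops $a_{2k}+b_{2k}$ that enter the argument, are governed — via Lucas's theorem — by the base-$2$ digits of $n$; for example $val_{x+1}(x^{n-3}M_1+1)$ is the least $k\ge 1$ for which $\binom{n-3}{k}+\binom{n-3}{k-1}+\binom{n-3}{k-2}$ is odd. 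Roughly doubling $n$ shifts those digits one place, and this is exactly what promotes a plateau of length $2^{t}$ to one of length $2^{t+1}$ and inserts one further drop, which supplies the inductive step. The hard part, I expect, is keeping ``same essential shape'' rigid enough to survive an entire super-step — one must control not merely the leading ``$x^{m}+x+1$'' term of each $A_{2k+1}$ but its full expansion in powers of $x+1$ — while carrying the base-$2$ bookkeeping uniformly through the stripping of $x^{a}(x+1)^{b}$. A lesser but genuine nuisance is the boundary of the range of $n$ (values near $2^{s+1}$ or $2^{s+2}$, and the degenerate small cases such as $n=4$, where the first plateau already abuts the terminal $1$), which must be handled separately to anchor the induction.
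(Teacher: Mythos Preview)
The paper does not actually prove this statement: it remains a conjecture. All the paper does is observe that Conjecture~\ref{theorempartcase} follows from Conjecture~\ref{proppartcase} by summing the block sizes, $m+1 = 1+1+2+2^2+\cdots+2^{s-1}+1 = 2^s+1$. Your reduction recovers exactly this summation, so on that score your proposal matches the paper.

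Where you diverge is that you develop considerably more machinery than the paper does. Your closed form $\phi^{j}(S) = 1 + \bigl(M_1/(x(x+1))\bigr)^{j}(S+1)$ for the iterates on a plateau, and the consequence that a plateau headed by $B$ has length $\min\bigl(val_x(B+1),val_{x+1}(B+1)\bigr)$, are correct and genuinely sharper than anything in the paper; the paper simply postulates the plateau lengths $1,1,2,4,\dots,2^{s-1}$ as Conjecture~\ref{proppartcase} without any such explanation. (Your correction of the boundary case $n=2^{s+1}$, forcing the strict inequality $2^{s+1}<n$, is also a real observation: the paper's data for $n=32$ indeed gives length $9=2^3+1$, not $17$.)

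That said, your proposal is not a proof. The inductive scheme you sketch---pass from $x^n+x+1$ to a polynomial of ``the same essential shape'' and roughly half the degree, tracking valuations via Lucas's theorem on the base-$2$ digits of $n$---is plausible heuristics, but you yourself flag the gap: you have not specified what ``same essential shape'' means precisely enough to survive a full super-step, nor carried the digit bookkeeping through the stripping of $x^a(x+1)^b$. So the status is: you have matched the paper's conditional argument, added useful structural lemmas the paper lacks, and outlined (but not completed) an attack on the underlying Conjecture~\ref{proppartcase} that the paper does not attempt at all.
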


Conjecture \ref{theorempartcase} follows from
Conjecture \ref{proppartcase}. Indeed, from Corollary \ref{casell=0}, the sequence of the odd polynomials is
$$[A_1, A_3, A_5, A_7, \ldots, A_{2m-1-2^s}, \ldots,A_{2m-3},A_{2m-1}, 1], \text{  of length $m+1$}.$$
One has, by Conjecture \ref{proppartcase}, $$m+1=1+1+2+2^2+\cdots +2^{s-1} + 1 = 1+(2^s-1) + 1 = 2^s+1.$$

\subsection{More small lengths}
Here, the lengths are all equal to $\deg(A)+1$. Define
\[
P_1=x^{14} + x^{10} + x^9 + x^8 + x^3 + x^2 + 1, \quad 
P_2 = x^{14} + x^{12} + x^9 + x^6 + x^4 + x^3 + 1,
\]
and
\[
P_3 = x^{14} + x^{13} + x^9 + x^8 + x^6 + x^5 + x^3 + x^2 + 1.
\]
Observe that $P_1$ and $P_2$ have the same odd degree sequences.

$$\begin{array}{|l|l|l|}
\hline
\text{Polynomial}&\text{Sequence}&\text{Length}\\
\hline
P_1&[14, 14, 13, 13, 12, 12, 10, 10, 9, 8, 7, 5, 4, 3, 0]&15\\
P_2&[14, 14, 13, 13, 12, 12, 10, 10, 9, 8, 7, 5, 4, 3, 0]&15\\
P_3&[14, 14, 13, 13, 11, 11, 10, 9, 8, 7, 5, 5, 4, 3, 0]&15\\
\hline
\end{array}$$

\subsection{Some technical remarks} \label{remarques}
We let $\overline{S}$ denote the polynomial obtained from $S \in \F_2[x]$ by replacing $x$ with $x+1$. We also consider the reciprocal $S^*$ of $S$, defined as
\[
S^*(x) = x^{\deg(S)} \cdot S \!\left(\tfrac{1}{x}\right).
\]

It is easy to see that the Collatz sequences of $\overline{A}$ are exactly obtained from those of $A$ by applying the operation $S \mapsto \overline{S}$.  
However, for $A^*$, this is not generally true. For example, if $A=x^8+x^3+1$, then the odd sequence is $[8, 7, 5, 5, 4, 3, 0]$, whereas for $A^*=x^8+x^5+1$, one obtains $[8, 6, 6, 0]$.  
Nonetheless, if $A$ is of the form $x^n+x+1$, then the odd degree sequence associated with $A^*$ is different (in general), but still regular enough (see Section \ref{othersmall}, with $U_{32} = {T_{32}}^*$).

\end{document}